\documentclass[a4paper,12pt,english]{smfart}

\usepackage{amssymb}
\usepackage{url}
\RequirePackage{mathrsfs}
\DeclareSymbolFont{rsfscript}{OMS}{rsfs}{m}{b}
\DeclareSymbolFontAlphabet{\mathrsfs}{rsfscript}
\usepackage[utopia,expert]{mathdesign}
\usepackage{lscape}
\usepackage{calligra}
\usepackage[T1]{fontenc}

\usepackage{frcursive}

\usepackage{palatino}
\usepackage{rotating}
\usepackage{graphicx}

\usepackage{enumerate}

\usepackage{color}
\definecolor{shadecolor}{gray}{0.90}

\def\bfit{\bfseries\itshape}

\input xypic
\xyoption{all}
\xyoption{arc}

\newtheorem{theo}{Theorem}[section]

\newtheorem{prop}[theo]{Proposition}

\newtheorem{coro}[theo]{Corollary}

\def\equat{\refstepcounter{theo}\begin{equation}}
\def\endequat{\end{equation}}

%%%%%%%%%%%%%%%%%%%%%%%%%%%%%%%%                %%%%%%%%%%%%%%%%%%%%          
%%    CARACTERES GOTHIQUES    %%                %%  DOUBLE BARRE  %%
%%%%%%%%%%%%%%%%%%%%%%%%%%%%%%%%                %%%%%%%%%%%%%%%%%%%%

    \def\CM{{\mathbb{C}}}

  \def\pG{{\mathfrak p}}  
    \def\QM{{\mathbb{Q}}}

    \def\ZM{{\mathbb{Z}}}

%%%%%%%%%%%%%%%%%%%%%%%%%%%%%%%%%%%%
%%    CARACTERES GOTHIQUES GRAS   %%
%%%%%%%%%%%%%%%%%%%%%%%%%%%%%%%%%%%%

%%%%%%%%%%%%%%%%%%%%%%                      %%%%%%%%%%%%%%%%%%%%          
%%  CARACTERES GRAS %%                      %%  CALLIGRAHPIES %%
%%%%%%%%%%%%%%%%%%%%%%                      %%%%%%%%%%%%%%%%%%%%

    \def\BC{{\mathcal{B}}}

\def\Kb{{\mathbf K}}    
    
    \def\MC{{\mathcal{M}}}
    
    \def\OC{{\mathcal{O}}}

%%%%%%%%%%%%%%%%%%%%%%%                         %%%%%%%%%%%%%%%%%%%%%%%%%%          
%% CARACTERES DROITS %%                         %%  CALLIGRAPHIES GRAS  %%
%%%%%%%%%%%%%%%%%%%%%%%                         %%%%%%%%%%%%%%%%%%%%%%%%%%

\def\Zrm{{\mathrm{Z}}}

%%%%%%%%%%%%%%%%%%%%%%%%%%%%%                 %%%%%%%%%%%%%%%%%%%%%%%%%%%
%% CARACTERES AVEC TILDE   %%                 %%  CALLIGRAPHIES TILDE  %%
%%%%%%%%%%%%%%%%%%%%%%%%%%%%%                 %%%%%%%%%%%%%%%%%%%%%%%%%%%

  \def\eti{{\tilde{e}}}

%%%%%%%%%%%%%%%%%%%%%%%%%%%                 %%%%%%%%%%%%%%%%%%%%%%%%%%%%%
%% CARACTERES AVEC BARRE %%                 %%  CALLIGRAPHIES CHAPEAU  %%
%%%%%%%%%%%%%%%%%%%%%%%%%%%                 %%%%%%%%%%%%%%%%%%%%%%%%%%%%%

%%%%%%%%%%%%%%%%%%%%%%%%%%%%%%%%
%%    CARACTERES AVEC POINT   %%
%%%%%%%%%%%%%%%%%%%%%%%%%%%%%%%%

%%%%%%%%%%%%%%%%%%%%%%%%%%%%%%%%
%%    CARACTERES AVEC BARRE   %%
%%%%%%%%%%%%%%%%%%%%%%%%%%%%%%%%

          \def\eba{{\bar{e}}}

%%%%%%%%%%%%%%%%%%%%%%%%%%%%%%%
%%    CARACTERES SURLIGNES   %%
%%%%%%%%%%%%%%%%%%%%%%%%%%%%%%%

%%%%%%%%%%%%%%%%%%%%%%%%%%%%%%%%%%%%%%
%%    CARACTERES GRAS AVEC TILDE    %%
%%%%%%%%%%%%%%%%%%%%%%%%%%%%%%%%%%%%%%

%%%%%%%%%%%%%%%%%%%%%%%%%%%%%%%%%%%%%%%%
%%    CARACTERES GRAS AVEC CHAPEAU    %%
%%%%%%%%%%%%%%%%%%%%%%%%%%%%%%%%%%%%%%%%

%%%%%%%%%%%%%%%%%%%%%%%%%%%%%%%%%%%%%%
%%    CARACTERES GRAS AVEC BARRE    %%
%%%%%%%%%%%%%%%%%%%%%%%%%%%%%%%%%%%%%%

%%%%%%%%%%%%%%%
%% GRECQUES  %%
%%%%%%%%%%%%%%%

\def\b{\beta}
\def\g{\gamma}

\def\L{\Lambda}

\def\o{\omega}

\def\s{\sigma}
\def\Sig{\Sigma}

%%%%%%%%%%%%%%%%%%%%%%%%%%%%%            %%%%%%%%%%%%%%%%%%%%%%%%%%%%%%%%%%%%%%%
%%  CARACTERES GRECS GRAS  %%            %%    CARACTERES GRECS AVEC TILDE    %%
%%%%%%%%%%%%%%%%%%%%%%%%%%%%%            %%%%%%%%%%%%%%%%%%%%%%%%%%%%%%%%%%%%%%%

%%%%%%%%%%%%%%%%%%%%%%%%%%%%%%%%%%%  %%%%%%%%%%%%%%%%%%%%%%%%%%%%%%%%%%
%%  CARACTERES GRECS AVEC BARRE  %%  %%  CARACTERES GRECS AVEC BARRE %%
%%%%%%%%%%%%%%%%%%%%%%%%%%%%%%%%%%%  %%%%%%%%%%%%%%%%%%%%%%%%%%%%%%%%%%

               \def\Sigh{{\hat{\Sig}}}

%%%%%%%%%%%%%%%%%%%%%%%%%%%%%%%%%%%%%%%%
%%  CARACTERES GRECS GRAS AVEC TILDE  %%
%%%%%%%%%%%%%%%%%%%%%%%%%%%%%%%%%%%%%%%%

%%%%%%%%%%%%%%%%%%%%%%%%%%%%%%%%%%%%%%%%%%
%%  CARACTERES GRECS GRAS AVEC CHAPEAU  %%
%%%%%%%%%%%%%%%%%%%%%%%%%%%%%%%%%%%%%%%%%%

%%%%%%%%%%%%%%%%%
%%  FONCTEURS  %%
%%%%%%%%%%%%%%%%%

\DeclareMathOperator{\Irr}{{\mathrm{Irr}}}

\DeclareMathOperator{\blocs}{{\mathrm{BlId}}}

%%  FLECHES  %%

\def\to{\rightarrow}
\def\longto{\longrightarrow}

\def\longtrait#1{\hspace{0.3em}{~ \SS{#1} ~ \over ~}\hspace{0.3em}}
\def\longtrait#1{\hspace{0.3em}\frac{~\SS{#1}}{~}\hspace{0.3em}}

%%%%%%%%%%%%%%%%%%%%%%%%
%%                    %%
%%      DIVERS        %%
%%                    %%
%%%%%%%%%%%%%%%%%%%%%%%%

\def\DS{\displaystyle}
\def\SS{\scriptstyle}

\def\lexp#1#2{\kern\scriptspace\vphantom{#2}^{#1}\kern-\scriptspace#2}

\mathchardef\inferieur="321E
\mathchardef\superieur="321F

\def\eqna{\begin{eqnarray*}}
\def\endeqna{\end{eqnarray*}}

\catcode`\@=11
%    \@car is actually already defined in latex.tex, but for
%    maximum robustness it needs to have the \long prefix:
\long\def\@car#1#2\@nil{#1}
\long\def\@first#1#2{#1}
\long\def\@second#1#2{#2}
\long\def\ifempty#1{\expandafter\ifx\@car#1@\@nil @\@empty
  \expandafter\@first\else\expandafter\@second\fi}
\catcode`\@=12

%%%%%%%%%%%%%%%%%%%%%%%%%%%%%%%%%%%%%%%%
%%                                    %%
%%   MACROS SPECIFIQUES A L'ARTICLE   %%
%%                                    %%
%%%%%%%%%%%%%%%%%%%%%%%%%%%%%%%%%%%%%%%%

\def\boitegrise#1#2{\begin{centerline}{\fcolorbox{black}{shadecolor}{~
    \begin{minipage}[t]{#2}{\vphantom{~}#1\vphantom{$A_{\DS{A_A}}$}}
            \end{minipage}~}}\end{centerline}\medskip}

\theoremstyle{remark}

\theoremstyle{plain}

\def\blocs{{\mathrm{Blocks}}}

\def\xyinj{\ar@{^{(}->}}
\def\xysur{\ar@{->>}}

\bigskip

\makeatletter
\def\hlinewd#1{%
\noalign{\ifnum0=`}\fi\hrule \@height #1 %
\futurelet\reserved@a\@xhline}
\makeatother

\newlength\epaisLigne

\usepackage{multirow}

\def\boitegrise#1#2{\begin{centerline}{\fcolorbox{black}{shadecolor}{~
    \begin{minipage}[t]{#2}{\vphantom{~}#1\vphantom{$A_{\DS{A_A}}$}}
            \end{minipage}~}}\end{centerline}\medskip}

\def\longiso{\hskip0.6mm{\xrightarrow{\sim}\hskip0.6mm}}
\def\longbij{\hskip0.6mm{{\stackrel{\sim}{\longleftrightarrow}}\hskip0.6mm}}

\addtolength{\hoffset}{-1.2cm}\addtolength{\textwidth}{2.4cm}\addtolength{\voffset}{-1.5cm}\addtolength{\textheight}{3cm}
%\addtolength{\hoffset}{-2.2cm}\addtolength{\textwidth}{4.4cm}\addtolength{\voffset}{-2.5cm}\addtolength{\textheight}{5cm}

\begin{document}

\baselineskip=16pt
%\large\baselineskip=20pt
%\Large\baselineskip=24pt

\title{Blocks of the Grothendieck ring \\
of equivariant bundles on a finite group}

\author{{\sc C\'edric Bonnaf\'e}}
\address{
Institut Montpelli\'erain Alexander Grothendieck (CNRS: UMR 5149), 
Universit\'e Montpellier 2,
Case Courrier 051,
Place Eug\`ene Bataillon,
34095 MONTPELLIER Cedex,
FRANCE} 

\makeatletter
\email{cedric.bonnafe@univ-montp2.fr}
\makeatother

% \author{{\sc Rapha\"el Rouquier}}
% 
% \address{Mathematical Institute, 
% University of Oxford, 24-29 St Giles', Oxford, OX1 3LB, UK}
% \email{rouquier@maths.ox.ac.uk}

%\makeatother

%\subjclass{According to the 2000 classification:
%Primary ???; Secondary ???}

\date{\today}

\thanks{The author is partly supported by the ANR (Project No ANR-12-JS01-0003-01 ACORT)}

\begin{abstract} 
If $G$ is a finite group, the Grothendieck group $\Kb_G(G)$ of the category 
of $G$-equivariant $\CM$-vector bundles on $G$ (for the action of $G$ on itself by conjugation) is endowed 
with a structure of (commutative) ring. If $K$ is a sufficiently large extension of $\QM_{\! p}$ 
and $\OC$ denotes the integral closure of $\ZM_{\! p}$ in $K$, the $K$-algebra 
$K\Kb_G(G)=K \otimes_\ZM \Kb_G(G)$ is split semisimple. The aim of this paper is to describe 
the $\OC$-blocks of the $\OC$-algebra $\OC \Kb_G(G)$.
\end{abstract}

\maketitle

\pagestyle{myheadings}

\markboth{\sc C. Bonnaf\'e}{\sc Blocks of the Grothendieck ring of equivariant vector bundles}

% \tableofcontents
% 
% \vskip1cm

\bigskip

\section{Notation, introduction}

\medskip

\subsection{Groups} 
We fix in this paper a finite group $G$, a prime number $p$ and a finite extension $K$ of 
the $p$-adic field $\QM_{\! p}$ such that $KH$ is split for all subgroups $H$ of $G$. We denote by $\OC$ 
the integral closure of $\ZM_{\! p}$ in $K$, by $\pG$ the maximal ideal of $\OC$, by 
$k$ the residue field of $\OC$ (i.e. $k=\OC/\pG$)
We denote by $\Irr(KG)$ the set of irreducible characters of $G$ (over $K$). 

A {\it $p$-element} (respectively {\it $p'$-element}) of $G$ is an element whose order is a power of $p$ 
(respectively prime to $p$). If $g \in G$, we denote by $g_p$ and $g_{p'}$ the unique 
elements of $G$ such that $g=g_pg_{p'}=g_{p'}g_p$, $g_p$ is a $p$-element and $g_{p'}$ 
is a $p'$-element. The set of $p$-elements (respectively $p'$-elements) of $G$ 
is denoted by $G_p$ (respectively $G_{p'}$).

If $X$ is a $G$-set (i.e. a set endowed with a left $G$-action), we denote by 
$[G\backslash X]$ a set of representatives of $G$-orbits in $X$. The reader can check 
that we will use formulas like 
$$\sum_{x \in [G \backslash X]} f(x)$$
(or families like $(f(x))_{x \in [G \backslash X]}$) only whenever $f(x)$ 
does not depend on the choice of the representative $x$ in its $G$-orbit. 
If $X$ is a set-$G$ (i.e. a set endowed with a right $G$-action), we define 
similarly $[X/G]$ and will use it according to the same principles.

\bigskip

\subsection{Blocks} 
A {\it block idempotent} of $k G$ (respectively $\OC G$) is a primitive idempotent of the 
center $\Zrm(kG)$ (respectively $\Zrm(\OC G)$) of $\OC G$. We denote by 
$\blocs(kG)$ (respectively $\blocs(\OC G$) the set of block idempotents of $kG$ 
(respectively $\OC G$). Reduction modulo $\pG$ induces a bijection 
$\blocs(\OC G) \longiso \blocs(kG)$, $e \mapsto \eba$ 
(whose inverse is denoted by $e \mapsto \eti$).

A {\it $p$-block} of $G$ is a subset $\BC$ of $\Irr(G)$ such that $\BC=\Irr(KG e)$, 
for some block idempotent $e$ of $\OC G$. 

\bigskip

\def\irrpaires{{\mathrm{IrrPairs}}}
\def\blocpaires{{\mathrm{BlPairs}}}
\def\fourier{{\mathrm{Four}}}
\def\fou#1{{ {\boldsymbol{\bigl\{}} \hskip0.5mm #1 \hskip0.5mm {\boldsymbol{\bigr\}}} }}
\def\bundles{{\BC{\!u\!n}}}

\subsection{Fourier coefficients}
Let 
$$\irrpaires(G)=\{(g,\g)~|~g \in G~\text{and}~\g \in \Irr(KC_G(g))\}$$
$$\blocpaires_p(G)=\{(s,e)~|~s \in G_{p'}~\text{and}~e \in \blocs(\OC C_G(s))\}.\leqno{\text{and}}$$
The group $G$ acts (on the left) on these two sets by conjugation. We set
$$\MC(G)=[G \backslash \irrpaires(G)]\qquad\text{and}\qquad
\MC^p(G)=[G \backslash \blocpaires(G)].$$

If $(g,\g)$, $(h,\eta) \in \irrpaires(G)$, 
we define, following Lusztig~\cite[2.5(a)]{lusztig},
$$\fou{(g,\g),(h,\eta)}=
\frac{1}{|C_G(g)|\cdot |C_G(h)|} \sum_{\substack{x \in G \\ xhx^{-1} \in C_G(g)}} 
\g(xhx^{-1}) \eta(x^{-1} g^{-1} x).$$
Note that $\fou{(g,\g),(h,\eta)}$ depends only on the $G$-orbit of $(g,\g)$ and on the $G$-orbit of $(h,\eta)$. 
%We define the {\it Fourier matrix} of $G$, and we denote by $\fourier(G)$, the matrix
%$$\fourier(G)=\Bigl(\fou{(g,\g),(h,\eta)}\Bigr)_{(g,\g),(h,\eta) \in \MC(G)}.$$

\bigskip

\subsection{Vector bundles} 
Except from Proposition~\ref{prop:blocks} below, all the definitions, all the results in this subsection 
can be found in~\cite[\S{2}]{lusztig}. 
We denote by $\bundles_G(G)$ the category of $G$-equivariant finite dimensional 
$K$-vector bundles on $G$ (for the action of $G$ by conjugation). Its Grothendieck 
group $\Kb_G(G)$ is endowed with a ring structure. 
For each $(g,\g) \in \MC(G)$, let $V_{g,\g}$ be the isomorphism class 
(in $\Kb_G(G)$) of the simple object in $\bundles_G(G)$ associated with $(g,\g)$, 
as in~\cite[\S{2.5}]{lusztig} (it is denoted $U_{g,\g}$ there). 
Then
$$\Kb_G(G)=\bigoplus_{(g,\g) \in \MC(G)} \ZM V_{g,\g}.$$
The $K$-algebra $K \Kb_G(G)=K \otimes_\ZM \Kb_G(G)$ is split semisimple and commutative. 
Its simple modules (which have dimension one) are also parametrized by $\MC(G)$: 
if $(g,\g) \in \MC(G)$, the $K$-linear map  
$$\Psi_{g,\g} : K\Kb_G(G) \longto K$$
defined by
$$\Psi_{g,\g}(V_{h,\eta}) = \frac{|C_G(g)|}{\g(1)} 
\cdot \fou{(h^{-1},\eta),(g,\g)}$$
is a morphism of $K$-algebras and all morphisms of $K$-algebras 
$K\Kb_G(G) \longto K$ are obtained in this way.

We define similarly block idempotents of $k\Kb_G(G)$ and $\OC \Kb_G(G)$, 
as well as {\it $p$-blocks of} $\MC(G) \longbij \Irr(K\Kb_G(G))$. 
\medskip

\def\brauer{{\mathrm{Br}}}

\subsection{Brauer maps}
Let $\L$ denote one of the two rings $\OC$ or $k$. 
If $g \in G$ (and if we set $s=g_{p'}$), we denote by $\brauer_g^\L$ the $\L$-linear map 
$$\brauer_g^\L : \L C_G(s) \longto \L C_G(g)$$
such that 
$$\brauer_g^\L(h) =
\begin{cases}
h & \text{if $h \in C_G(g)$},\\
0 & \text{if $h \not\in C_G(g)$},
\end{cases}$$
for all $h \in C_G(s)$. Recall~\cite[Lemma~15.32]{isaacs} that 
\equat\label{eq:brauer}
\text{\it $\brauer_g^k$ induces a {\bfit morphism of algebras} $\Zrm(kC_G(s)) \to \Zrm(kC_G(g))$.}
\endequat
Therefore, if $e \in \blocs(\OC C_G(s))$, then $\brauer_g^k(e)$ is 
an idempotent of $\Zrm(k C_G(g))$ (possibly equal to zero) and we can write it a sum 
$\brauer_g^k(e)=e_1 + \cdots + e_n$, where $e_1$,\dots, $e_n$ are pairwise distinct 
block idempotents of $kC_G(g)$. We then set
$$\b_g^\OC(e)=\sum_{i=1}^n \eti_i.$$
It is an idempotent (possibly equal to zero, possibly non-primitive) of $\Zrm(\OC C_G(g))$.

\medskip

\subsection{The main result}\label{sub:main} 
In order to state more easily our main result, it will be more convenient (though it is not 
strictly necessary) to fix a particular set of representatives of conjugacy classes of $G$.

\medskip

\boitegrise{{\noindent \bf Hypothesis and notation.} {\it 
From now on, and until the end of this paper, we denote by:
\begin{itemize}
\item[$\bullet$] $[G_{p'}/\!\sim]$ a set of representatives of conjugacy classes of $p'$-elements in $G$.
\item[$\bullet$] $[G/\!\sim]$ a set of representatives of conjugacy classes of elements of $G$ such that, 
for all $g \in [G/\!\sim]$, $g_{p'} \in [G_{p'}/\!\sim]$.
\end{itemize}
We also assume that, if $(g,\g) \in \MC(G)$ or $(s,e) \in \MC^p(G)$, then 
$g \in [G/\!\sim]$ and $s \in [G_{p'}/\!\sim]$. %\\
$$\longtrait{\hphantom{AAAAAAAAAAAA}}$$
\hphantom{Aa} If $(s,e) \in \MC^p(G)$, we define $\BC_G(s,e)$ to be the set of pairs 
$(g,\g) \in \MC(G)$ such that:
\begin{itemize}
\item[(1)] $g_{p'}=s$.
\item[(2)] $\g \in \Irr(K C_G(g)\b_g^\OC(e))$.
\end{itemize}}
\vskip-0.4cm}{0.68\textwidth}

\bigskip

\begin{theo}\label{theo:main}
The map $(s,e) \mapsto \BC_G(s,e)$ induces a bijection between $\MC^p(G)$ to the set of $p$-blocks 
of $\MC(G)$.
\end{theo}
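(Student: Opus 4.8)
The $p$-blocks of $\MC(G)$ are, by definition, the fibres of the map $\MC(G)\to\blocs(\OC\Kb_G(G))$ sending $(g,\g)$ to the unique block idempotent $b$ with $\Psi_{g,\g}(b)=1$, and since $\OC$ is complete local one has $\blocs(\OC\Kb_G(G))\longiso\blocs(k\Kb_G(G))$. Because $\Kb_G(G)$ is a ring finitely generated as a $\ZM$-module and $K\Kb_G(G)$ is split semisimple with characters $(\Psi_{g,\g})$, each $V_{g,\g}$ is integral over $\ZM$, so the values $\Psi_{g,\g}(V_{h,\eta})$ (entries of Lusztig's Fourier matrix up to explicit scalars) lie in $\OC$; writing $e_{g,\g}$ for the primitive idempotent of $K\Kb_G(G)$ with $\Psi_{h,\eta}(e_{g,\g})=\delta_{(g,\g),(h,\eta)}$, the $p$-block of $(g,\g)$ is thus the smallest $S\subseteq\MC(G)$ with $(g,\g)\in S$ and $\sum_{(h,\eta)\in S}e_{h,\eta}\in\OC\Kb_G(G)$. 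The theorem then splits into two assertions: \emph{(a)} for each $(s,e)\in\MC^p(G)$ the idempotent $f_{s,e}:=\sum_{(g,\g)\in\BC_G(s,e)}e_{g,\g}$ lies in $\OC\Kb_G(G)$ (so every $p$-block sits inside a single $\BC_G(s,e)$); and \emph{(b)} the $\OC$-algebra $\OC\Kb_G(G)\,f_{s,e}$ is indecomposable (so every $\BC_G(s,e)$ sits inside a single $p$-block). That $(s,e)\mapsto\BC_G(s,e)$ is injective and that the $\BC_G(s,e)$ partition $\MC(G)$ are formal consequences of \eqref{eq:brauer}, since $\brauer_g^k$ being an algebra homomorphism forces $\sum_{e\in\blocs(\OC C_G(s))}\brauer_g^k(e)=\brauer_g^k(1)=1$ with orthogonal summands.

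The basic tool I would use is the ring homomorphism $\mathrm{dg}\colon\Kb_G(G)\to\Zrm(\OC G)$ sending the class of a $G$-equivariant bundle $\FC$ to $\sum_{g\in G}\dim(\FC_g)\,g$: it lands in $\Zrm(\OC G)$ by equivariance, it is multiplicative because the product on $\Kb_G(G)$ is convolution of bundles (the unit being $V_{1,\mathbf 1}$), and it is surjective because $\mathrm{dg}(V_{g,\mathbf 1})$ is the class sum of $g$. On characters it realises the inclusion $\{(1,\chi):\chi\in\Irr(G)\}\hookrightarrow\MC(G)$: a direct computation gives $\omega_\chi\circ\mathrm{dg}=\Psi_{1,\chi}$. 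Hence if $\chi,\chi'$ lie in the same $p$-block of $G$ then $(1,\chi),(1,\chi')$ lie in the same $p$-block of $\MC(G)$, which is exactly the theorem restricted to pairs with $g=1$ — and it furnishes the template for the general case. The same device applied to the Hopf subalgebras $\OC(G)\rtimes\OC C_G(s)$ of the Drinfeld double $\OC(G)\rtimes\OC G$ (whose module categories are the $C_G(s)$-equivariant bundles on $G$) yields ``relative'' homomorphisms that bring the block theory of the centralisers $C_G(s)$ into play.

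For \emph{(a)} I would handle the two conditions defining $\BC_G(s,e)$ in turn. Condition (1): show that for each $s\in[G_{p'}/\!\sim]$ the idempotent $\varepsilon_s:=\sum_{g_{p'}=s}e_{g,\g}$ lies in $\OC\Kb_G(G)$, giving $\OC\Kb_G(G)=\prod_s\varepsilon_s\OC\Kb_G(G)$ and reducing everything to the piece ``local at $s$''; expanding $\varepsilon_s$ in the basis $(V_{h,\eta})$ amounts to inverting the Fourier matrix, which for the double is again a matrix of coefficients $\fou{\cdot,\cdot}$ up to explicit scalars, so the required integrality is a congruence modulo $\pG$ among Fourier coefficients that I would reduce — via the homomorphisms above — to the elementary fact $\chi(h)\equiv\chi(h_{p'})\pmod{\pG}$ for ordinary characters (eigenvalues of the $p$-part are $p$-power roots of unity, hence $\equiv1$), which is precisely what decouples the $p$-parts $g_p$. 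Condition (2): inside $\varepsilon_s\OC\Kb_G(G)$, produce for each $e\in\blocs(\OC C_G(s))$ the idempotent $f_{s,e}$, characterised by the requirement that its ``stalk at $g$'' (for $g_{p'}=s$) be $\b_g^\OC(e)$; the compatibility of the family $\bigl(\b_g^\OC(e)\bigr)_{g_{p'}=s}$ — so that block idempotents of $\OC C_G(s)$ go to an orthogonal decomposition of $\varepsilon_s$ — is forced by \eqref{eq:brauer} and the way $\brauer_g^k$ varies with $g$ over a fixed $p'$-part. For \emph{(b)} I would show that no idempotent of $\varepsilon_s\OC\Kb_G(G)$ separates two elements of $\BC_G(s,e)$: the blocks of $C_G(g)$ occurring in $\b_g^\OC(e)$ are exactly the Brauer correspondents of $e$, and the connectedness built into Brauer correspondence, together with $\chi(h)\equiv\chi(h_{p'})\pmod\pG$ used to absorb the $p$-parts, forces the characters $\Psi_{g,\g}$ with $(g,\g)\in\BC_G(s,e)$ to share a common reduction modulo $\pG$ (or to be chained by such coincidences).

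I expect the main obstacle to be precisely the arithmetic of Lusztig's Fourier transform modulo $\pG$: one must control the reductions of the coefficients $\fou{(g,\g),(h,\eta)}$ finely enough to prove the integrality of $\varepsilon_s$ and $f_{s,e}$ in \emph{(a)} and the non-separation in \emph{(b)}. The strategy for not inverting the Fourier matrix head-on is to recognise the pieces $\varepsilon_s\OC\Kb_G(G)$, and their refinement by the $f_{s,e}$, inside algebras manufactured from the group algebras $\OC C_G(g)$ through the homomorphisms described above, feeding in only three external ingredients: Isaacs' fact \eqref{eq:brauer} that $\brauer_g^k$ is an algebra map, the behaviour of the Brauer maps under $g\mapsto g_{p'}$, and the congruence for character values on $p$-singular elements. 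Everything remaining — the bijectivity and surjectivity of $(s,e)\mapsto\BC_G(s,e)$ onto the set of $p$-blocks — is then bookkeeping with the representatives fixed in the boxed hypothesis.
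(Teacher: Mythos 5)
Your overall skeleton --- the partition \ref{eq:partition}, then \emph{(a)} each $\BC_G(s,e)$ is a union of $p$-blocks and \emph{(b)} each is contained in a single $p$-block --- is logically equivalent to what the paper does, and your homomorphism $\mathrm{dg}\colon\OC\Kb_G(G)\to\Zrm(\OC G)$, $V_{h,\eta}\mapsto\eta(1)\,\Sigh_1(h)$, is correct and genuinely pleasant: one checks $\o_{1,\chi}\circ\mathrm{dg}=\Psi_{1,\chi}$, which gives the theorem for the pairs $(1,\chi)$. But the proposal does not contain the two arguments that carry the actual weight of the statement. For \emph{(b)} you offer only ``the connectedness built into Brauer correspondence''; this is not a citable statement, and it does not by itself produce the required congruence $\Psi_{g,\g}(V_{h,\eta})\equiv\Psi_{s,\s}(V_{h,\eta})\bmod\pG$ for $\g\in\Irr(KC_G(g)\b_g^\OC(e))$ and $\s\in\Irr(KC_G(s)e)$. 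The paper obtains it by rewriting $\Psi_{g,\g}(V_{h,\eta})$ as a sum over $[C_G(g)\backslash G/C_G(h)]$ of terms $\eta(y^{-1}gy)\,\o_{g,\g}\bigl(\Sigh_g(yhy^{-1})\bigr)$ (\ref{eq:fourier mieux}), regrouping the double cosets under $[C_G(s)\backslash G/C_G(h)]$ so that the inner sums assemble into $\o_{g,\g}\bigl(\brauer_g^\OC(\Sigh_s(xhx^{-1}))\bigr)$, and then invoking $\o_{s,\s}(z)\equiv\o_{g,\g}\bigl(\brauer_g^\OC(z)\bigr)\bmod\pG$ (\ref{eq:brauer-central}, i.e.\ \cite[Lemma~15.44]{isaacs}). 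That last congruence is the indispensable external input; ``the behaviour of the Brauer maps under $g\mapsto g_{p'}$'' is too vague to stand in for it, and your list of three ingredients omits it. For \emph{(a)}, exhibiting $\varepsilon_s$ and $f_{s,e}$ as integral idempotents by inverting the Fourier matrix is nowhere carried out; in particular the direction ``$(s,\s)$ and $(s,\s')$ linked in $\MC(G)$ implies $\s,\s'$ in the same $p$-block of $C_G(s)$'' needs a genuine separation argument (the paper tests $\Psi_{s,\s}-\Psi_{s,\s'}$ against the generalized characters $\eta_{s,h}$ supported on elements whose $p'$-part is conjugate to $s$), and nothing in your sketch plays that role. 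The appeal to Hopf subalgebras of the Drinfeld double is a plausible research direction, not a proof.

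A structural remark that would lighten your plan considerably: you do not need to prove \emph{(a)} directly. Once \emph{(b)} is established, and once you know that distinct $(s,e)$ yield pairs lying in distinct $p$-blocks (this is the content of Propositions \ref{prop:p-partie} and \ref{prop:p-partie-gamma}, for which your $\mathrm{dg}$ supplies the first but not the second), the fact that the sets $\BC_G(s,e)$ partition $\MC(G)$ forces each $\BC_G(s,e)$ to coincide with the unique $p$-block containing it; the integrality of $f_{s,e}$ then comes for free rather than by an a priori Fourier-matrix computation. As written, your proposal identifies a correct decomposition of the problem and one useful homomorphism, but leaves both substantive halves unproved.
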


\section{Proof of Theorem~\ref{theo:main}}

\medskip

\subsection{Central characters and congruences}
If $(g,\g) \in \irrpaires(G)$, we denote by $\o_{g,\g} : \Zrm(KC_G(g)) \to K$ the {\it central character} 
associated with $\g$ (if $z \in \Zrm(KC_G(g))$, then $\o_{g,\g}(z)$ is the scalar through which $z$ acts 
on an irreducible $KC_G(g)$-module affording the character $\g$). It is a morphism of algebras: 
when restricted to $\Zrm(\OC C_G(g))$, it has values in $\OC$. 

If $h \in C_G(g)$, we denote by $\Sig_g(h)$ conjugacy class of $h$ in $C_G(g)$ and we set
$$\Sigh_g(h)=\sum_{v \in \Sig_g(h)} v \in \Zrm(\OC C_G(g)).$$
We have
\equat\label{eq:omega}
\o_{g,\g}\bigl(\Sigh_g(h)\bigr) = \frac{|\Sig_g(h)| \cdot \g(h)}{\g(1)}.
\endequat
We also recall the following classical results:

\bigskip

\begin{prop}\label{prop:blocs-kg}
If $g \in G$ and $\g$, $\g'$ are two irreducible characters of $C_G(g)$, then 
$\g$ and $\g'$ lie in the same $p$-block of $C_G(g)$ if and only if 
$$\o_{g,\g}(\Sigh_g(h)) \equiv \o_{g,\g'}(\Sigh_g(h)) \mod \pG$$
for all $h \in C_G(g)$.
\end{prop}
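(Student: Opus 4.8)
The plan is to recast the congruence in Proposition~\ref{prop:blocs-kg} as an equality of reduced central characters and then to invoke the classical characterization of $p$-blocks via central characters. Write $H=C_G(g)$. I would begin by recalling the facts to be used: $\Zrm(\OC H)$ is a free $\OC$-module with basis the class sums $\Sigh_g(h)$, where $h$ ranges over a set of representatives of the conjugacy classes of $H$; reduction modulo $\pG$ identifies $\Zrm(kH)$ with $\Zrm(\OC H)\otimes_{\OC}k$, the images of the class sums forming a $k$-basis of it; and, as recalled just before the proposition, $\o_{g,\g}$ sends $\Zrm(\OC H)$ into $\OC$, hence induces a $k$-algebra homomorphism $\overline{\o}_{g,\g}\colon\Zrm(kH)\to k$, $\overline{z}\mapsto\overline{\o_{g,\g}(z)}$. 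Since $\Sigh_g(h)$ depends only on the $H$-conjugacy class of $h$ and these class sums span $\Zrm(kH)$ over $k$, the condition in the statement is exactly that $\overline{\o}_{g,\g}=\overline{\o}_{g,\g'}$. It then suffices to prove that $\overline{\o}_{g,\g}=\overline{\o}_{g,\g'}$ if and only if $\g$ and $\g'$ lie in the same $p$-block of $H$.

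For the ``only if'' part I would note that a block idempotent $b$ of $\OC H$ lies in $\Zrm(\OC H)$, so $\o_{g,\g}(b)$ is an idempotent of $\OC$, hence $0$ or $1$; moreover it equals $1$ precisely when $\g\in\Irr(KHb)$, i.e. when $\g$ belongs to the $p$-block labelled by $b$, since $b$ acts as the identity on the modules in its block and as $0$ on all the others. As $1\neq 0$ in $k$, the family $\bigl(\overline{\o}_{g,\g}(\bar b)\bigr)_{b\in\blocs(\OC H)}$ has exactly one nonzero term, the one indexing the block of $\g$; thus $\overline{\o}_{g,\g}$ determines that block, and $\overline{\o}_{g,\g}=\overline{\o}_{g,\g'}$ forces $\g$ and $\g'$ into the same $p$-block.

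For the converse, suppose $\g$ and $\g'$ both lie in the $p$-block labelled by a block idempotent $b$. Using $\Zrm(kH)=\bigoplus_{b'\in\blocs(\OC H)}\Zrm(kH)\bar{b'}$ and the fact that $\overline{\o}_{g,\g}$ sends $\bar{b'}$ to $0$ for $b'\neq b$, the homomorphism $\overline{\o}_{g,\g}$ annihilates each ideal $\Zrm(kH)\bar{b'}$ with $b'\neq b$ and so factors through a $k$-algebra homomorphism $\Zrm(kHb)=\Zrm(kH)\bar b\to k$; likewise for $\g'$. Now $\Zrm(kHb)$ is a finite-dimensional commutative $k$-algebra whose only idempotents are $0$ and $\bar b$, since $\bar b$ is primitive in $\Zrm(kH)$ (it is the reduction of the primitive idempotent $b$); hence $\Zrm(kHb)$ is local, and a local finite-dimensional $k$-algebra admits at most one $k$-algebra homomorphism to $k$ (its kernel must be the unique maximal ideal, and a $k$-algebra endomorphism of $k$ is the identity). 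Therefore the homomorphisms $\Zrm(kHb)\to k$ coming from $\g$ and from $\g'$ agree, and as $\overline{\o}_{g,\g}$ and $\overline{\o}_{g,\g'}$ both vanish on the remaining summands we get $\overline{\o}_{g,\g}=\overline{\o}_{g,\g'}$.

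The step requiring the most care is the last one, namely that a single $p$-block of $H$ carries a single reduced central character; but this is precisely the locality of the center of a block algebra, which is elementary once the block decomposition of $kH$ is available. In fact Proposition~\ref{prop:blocs-kg} is entirely classical, so a perfectly acceptable alternative is simply to quote it, e.g. from \cite{isaacs}.
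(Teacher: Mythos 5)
Your proof is correct. The paper itself gives no argument for Proposition~\ref{prop:blocs-kg}: it is introduced by the words ``We also recall the following classical results'' and is used as a known fact from the literature (see \cite[Chapter~15]{isaacs}), which is precisely the alternative you offer in your closing sentence; the self-contained derivation you give --- class sums as an $\OC$-basis of $\Zrm(\OC C_G(g))$ so that the congruence says the reduced central characters agree on $\Zrm(kC_G(g))$, evaluation at block idempotents for one implication, and locality of $\Zrm(kC_G(g))\bar{b}$ coming from primitivity of $\bar{b}$ for the other --- is the standard one. One cosmetic remark: the implication you label the ``only if'' part (the congruence forces $\g$ and $\g'$ into the same block) is the ``if'' direction of the statement as written, but both implications are established, so nothing is missing.
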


\begin{prop}\label{prop:blocks}
Let $(g,\g)$ and $(g',\g')$ be two elements of $\MC(G)$. Then $(g,\g)$ and $(g',\g')$ 
belong to the same $p$-block of $\MC(G)$ if and only if 
$$\Psi_{g,\g}(V_{h,\eta}) \equiv \Psi_{g',\g'}(V_{h,\eta}) \mod \pG$$
for all $(h,\eta) \in \MC(G)$.
\end{prop}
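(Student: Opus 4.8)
The statement to prove is Proposition~\ref{prop:blocks}, which says that two parameters $(g,\g),(g',\g')\in\MC(G)$ lie in the same $p$-block of $\MC(G)$ iff $\Psi_{g,\g}(V_{h,\eta})\equiv\Psi_{g',\g'}(V_{h,\eta})\bmod\pG$ for all $(h,\eta)\in\MC(G)$. The key observation is that $\Kb_G(G)$ is a commutative $\ZM$-order in the split semisimple commutative $K$-algebra $K\Kb_G(G)$, whose set of $K$-algebra homomorphisms to $K$ is exactly $\{\Psi_{g,\g}\mid (g,\g)\in\MC(G)\}$, and that the $V_{h,\eta}$ form a $\ZM$-basis. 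So this is an instance of a completely general principle: for a commutative ring $R$ that is a free $\ZM$-module of finite rank $n$ with basis $b_1,\dots,b_n$, such that $KR$ is split semisimple with $K$-points $\l_1,\dots,\l_n : KR\to K$ taking values in $\OC$ on $\OC R$, two points $\l_i,\l_j$ lie in the same block of $\OC R$ iff $\l_i(b_m)\equiv\l_j(b_m)\bmod\pG$ for all $m$. I would prove Proposition~\ref{prop:blocks} by first isolating and proving this general lemma, then specializing.

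\emph{The general lemma and its proof.} Let $A=\OC R$. Since $KA=K\otimes_\OC A$ is split semisimple commutative with primitive idempotents $\epsilon_1,\dots,\epsilon_n$ corresponding to $\l_1,\dots,\l_n$ (so $\l_i(\epsilon_j)=\delta_{ij}$), the block idempotents of $A$ are the sums $\epsilon_I=\sum_{i\in I}\epsilon_i$ that happen to lie in $A$, and they partition $\{1,\dots,n\}$ into the blocks. Reduction mod $\pG$ gives a bijection between blocks of $A$ and blocks of $kR=A/\pG A$. Now define an equivalence relation on $\{1,\dots,n\}$ by $i\sim j$ iff $\l_i(a)\equiv\l_j(a)\bmod\pG$ for all $a\in A$; since the $b_m$ generate $A$ as an $\OC$-module and each $\l_i$ is $\OC$-linear, this is the same as $\l_i(b_m)\equiv\l_j(b_m)\bmod\pG$ for all $m$. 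I claim this is exactly the block relation. For the easy direction: if $i,j$ are in the same block, with block idempotent $e=\epsilon_I\in A$, $i,j\in I$, then for any $a\in A$ we have $ae\in A$ and $\l_i(ae)=\l_i(a)$, $\l_j(ae)=\l_j(a)$; working in the semisimple algebra $kR$, writing $\aba$ for the image of $a$, the value $\overline{\l_i(a)}$ is the scalar by which $\aba\eba$ acts on the $i$-th simple $kR$-module — but all constituents of $\eba$ lie in a single block, and I use Proposition~\ref{prop:blocs-kg}-style congruences... actually cleaner: reduce mod $\pG$ and use that $\l_i,\l_j$ both factor through the same block algebra $kRe$, which is local? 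No — $kR$ need not have local blocks. Instead: the honest argument is that $\overline{\l_i}$ and $\overline{\l_j}$ are $k$-algebra maps $kR\to k$ (if $\l_i\ne\l_j$ on $A$ still their reductions could coincide), and $\l_i\sim\l_j$ iff $\overline{\l_i}=\overline{\l_j}$ as maps $kR\to k$; one then shows $\overline{\l_i}=\overline{\l_j}$ iff $\epsilon_i,\epsilon_j$ lie in the same block by a standard lifting-of-idempotents argument (two central primitive idempotents of $KA$ are $\pG$-adically close enough to merge in $A$ precisely when the corresponding characters agree mod $\pG$).

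\emph{The main obstacle and how I would handle it.} The delicate point is the direction: ``$\l_i\sim\l_j$ $\Rightarrow$ same block,'' i.e. if the characters agree mod $\pG$ then their idempotents cannot be separated within $A$. The standard tool here is Isaacs-style reasoning (and the excerpt already cites \cite{isaacs}): reduction mod $\pG$ identifies $\blocs(A)\longiso\blocs(kR)$; two linear characters $\overline{\l_i},\overline{\l_j}:kR\to k$ that coincide clearly kill the same maximal ideal, hence lie over the same block of $kR$ (a block is a connected component of $\Spec$, and they map to the same point). Conversely if $\overline{\l_i}\ne\overline{\l_j}$ there is $a\in A$ with $\l_i(a)\not\equiv\l_j(a)$. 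The only real content is that distinct $K$-points can still reduce to the same $k$-point and that this reduction-identification is compatible with the block decomposition — both are classical. So the proof of the general lemma is short once set up correctly, and then \emph{Proposition~\ref{prop:blocks} follows immediately} by taking $R=\Kb_G(G)$, $b_m=V_{h,\eta}$, $\l_i=\Psi_{g,\g}$, and invoking the stated facts that $K\Kb_G(G)$ is split semisimple with exactly these $K$-points and that $\Psi_{g,\g}$ takes values in $\OC$ on $\OC\Kb_G(G)$ (which must be checked: the Fourier coefficients $\fou{(h^{-1},\eta),(g,\g)}$ scaled by $|C_G(g)|/\g(1)$ are algebraic integers in $\OC$ — this is the one computational verification I would include, or cite from \cite{lusztig}). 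I expect the write-up to be a half-page: one paragraph recalling the idempotent/block formalism and the $\blocs(\OC R)\leftrightarrow\blocs(kR)$ bijection, one paragraph giving the two-way implication via reduction mod $\pG$, and one line specializing.
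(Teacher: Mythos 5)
Your overall strategy is the right one and is, implicitly, the paper's: Proposition~\ref{prop:blocks} is stated there without proof, as an instance of the classical description of the blocks of a commutative $\OC$-order $A$ (here $A=\OC\Kb_G(G)$, free over $\OC$ with basis the $V_{h,\eta}$ and with $KA$ split semisimple and commutative) in terms of congruences between its $K$-points. Your reduction to that general lemma, and your remark that congruence on the basis elements is equivalent to congruence on all of $A$ by $\OC$-linearity, are both correct. You are also right that one must check $\Psi_{g,\g}(V_{h,\eta})\in\OC$; this follows from the rearranged formula~\ref{eq:fourier mieux}, which writes each value as a sum of terms $\eta(x^{-1}gx)\,\o_{g,\g}\bigl(\Sigh_g(xhx^{-1})\bigr)$, each of which lies in $\OC$.

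The gap is in your treatment of the two implications: you have the difficulty inverted, and you then talk yourself out of the correct argument. The direction ``congruent $\Rightarrow$ same block'' is the trivial one: if the two pairs were in different blocks, the block idempotent $e\in A$ over $(g,\g)$ is an $\OC$-linear combination of the $V_{h,\eta}$ with $\Psi_{g,\g}(e)=1$ and $\Psi_{g',\g'}(e)=0$, contradicting the congruences (this also disposes of your worry about distinct $K$-points reducing to the same $k$-point). The direction that needs an argument is ``same block $\Rightarrow$ congruent'', and the fact you explicitly reject --- ``$kR$ need not have local blocks'' --- is exactly the fact you need, and it is true here: a block of the \emph{commutative} finite-dimensional $k$-algebra $A/\pG A$ is a connected Artinian commutative ring, hence local, and since it admits a $k$-point its residue field is $k$. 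Consequently it has a unique $k$-algebra morphism to $k$ (reduction modulo the maximal ideal), so the reductions of two characters lying over the same block --- via the lifting bijection $\blocs(A)\longiso\blocs(A/\pG A)$ --- must coincide, which is the desired congruence. Your substitute for this step (idempotents ``close enough to merge'') is not an argument; you were thinking of the noncommutative group-algebra case, where blocks are indeed not local, but $\Kb_G(G)$ is commutative, and that is precisely what makes the lemma go through. With that one correction your proof is complete.
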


\bigskip

\subsection{Around the Brauer map}
As Brauer maps are morphisms of algebras, we have 
$$\sum_{e \in \blocs(kC_G(g_{p'}))} \brauer_g^p(e) = 1,$$
and so
\equat\label{eq:partition}
\text{\it The family $\bigl(\BC_G(g,e)\bigr)_{(g,e) \in \MC^p(G)}$ is a partition of $\MC(G)$.}
\endequat

Now, let $(g,\g) \in \MC(G)$ and let $s=g_{p'}$. 
If $e \in \blocs(\OC C_G(s))$ is such that $\g \in \Irr(K C_G(g)\b_g^\OC(e))$, 
and if $\s \in \Irr(K C_G(s) e)$, then~\cite[Lemma~15.44]{isaacs} 
\equat\label{eq:brauer-central}
\o_{s,\s}(z) \equiv \o_{g,\g}\bigl(\brauer_g^\OC(z)\bigr) \mod \pG
\endequat
for all $z \in \Zrm(\OC C_G(s))$. 

\bigskip

\subsection{Rearranging the formula for $\Psi_{g,\g}$} 
If $(g,\g)$, $(h,\eta) \in \irrpaires(g)$ then 
\equat\label{eq:fourier mieux}
\Psi_{g,\g}(V_{h,\eta}) = \sum_{\substack{x \in [C_G(g)\backslash G /C_G(h)] \\ xhx^{-1} \in C_G(g)}}
\eta(x^{-1} g x) \o_{g,\g}\bigl(\Sigh_g(xhx^{-1})\bigr).
\endequat
\begin{proof}
By definition,
$$\Psi_{g,\g}(V_{h,\eta}) = \frac{1}{\g(1)\cdot |C_G(h)|} 
\sum_{\substack{x \in G \\ xhx^{-1} \in C_G(g)}} \!\!\eta(x^{-1} g x)\g(xhx^{-1}) 
= \frac{1}{\g(1)} \sum_{\substack{x \in [G/C_G(h)] \\ xhx^{-1} \in C_G(g)}} \eta(x^{-1} g x)\g(xhx^{-1}).$$ 
Now, if $x \in G$ is such that $xhx^{-1} \in C_G(g)$ and if $u \in C_G(g)$, then 
$$\eta\bigl((ux)^{-1} g (ux)\bigr)\g\bigl((ux)h(ux)x^{-1}\bigr)=\eta(x^{-1}gx)\g(xhx^{-1}).$$
So we can gather the terms in the last sum according to their $C_G(g)$-orbit. We get
$$\Psi_{g,\g}(V_{h,\eta}) = 
\sum_{\substack{x \in [C_G(g)\backslash G/C_G(h)] \\ xhx^{-1} \in C_G(g)}} 
\eta(x^{-1} g x)\frac{|C_G(g)|}{|C_G(g) \cap xC_G(h)x^{-1}|} \cdot \frac{\g(xhx^{-1})}{\g(1)}.
$$
But, for $x$ in $G$ such that $xhx^{-1} \in C_G(g)$, 
$$\frac{|C_G(g)|}{|C_G(g) \cap xC_G(h)x^{-1}|}=|\Sig_g(xhx^{-1})|,$$
so the result follows from~\ref{eq:omega}.
\end{proof}

\bigskip

\begin{coro}\label{coro:block-kgg-inclus}
Let $g \in [G/\sim]$ and let $\g$, $\g' \in \Irr(KC_G(g))$ lying in the same $p$-block of $C_G(g)$. 
Then $(g,\g)$ and $(g,\g')$ lie in the same $p$-block of $\MC(G)$.
\end{coro}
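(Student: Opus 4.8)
The plan is to verify the congruence criterion of Proposition~\ref{prop:blocks}. Since the two pairs under consideration share the same first component $g$, it is enough to show that
$$\Psi_{g,\g}(V_{h,\eta}) \equiv \Psi_{g,\g'}(V_{h,\eta}) \mod \pG$$
for every $(h,\eta) \in \MC(G)$; once this is established, Proposition~\ref{prop:blocks} gives the conclusion directly (note that $(g,\g)$ and $(g,\g')$ do belong to $\MC(G)$, since $g \in [G/\!\sim]$ and such a pair is the unique element of its $G$-orbit whose first coordinate is $g$).

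To prove the displayed congruence I would expand both sides using formula~\ref{eq:fourier mieux}. Writing
$$\Psi_{g,\g}(V_{h,\eta}) = \sum_{\substack{x \in [C_G(g)\backslash G /C_G(h)] \\ xhx^{-1} \in C_G(g)}} \eta(x^{-1} g x)\, \o_{g,\g}\bigl(\Sigh_g(xhx^{-1})\bigr),$$
and likewise for $\g'$, one observes that the two expressions run over exactly the same index set and that the coefficients $\eta(x^{-1}gx)$ are identical in both; the only discrepancy lies in the factors $\o_{g,\g}\bigl(\Sigh_g(xhx^{-1})\bigr)$ versus $\o_{g,\g'}\bigl(\Sigh_g(xhx^{-1})\bigr)$. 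Hence it suffices to compare these two families of scalars term by term.

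This is where the hypothesis enters: as $\g$ and $\g'$ lie in the same $p$-block of $C_G(g)$, Proposition~\ref{prop:blocs-kg} yields $\o_{g,\g}(\Sigh_g(k)) \equiv \o_{g,\g'}(\Sigh_g(k)) \mod \pG$ for every $k \in C_G(g)$, which I apply with $k = xhx^{-1}$ — this is legitimate because, for each coset $x$ occurring in the sum, the element $xhx^{-1}$ indeed lies in $C_G(g)$. Since $\eta(x^{-1}gx)$ is an algebraic integer and therefore lies in $\OC$, multiplication by it preserves congruence modulo $\pG$, and summing over the finitely many cosets $x$ gives $\Psi_{g,\g}(V_{h,\eta}) \equiv \Psi_{g,\g'}(V_{h,\eta}) \mod \pG$, as wanted. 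I do not expect any real obstacle here: the argument is a straightforward combination of~\ref{eq:fourier mieux}, Proposition~\ref{prop:blocs-kg} and Proposition~\ref{prop:blocks}, and the only point that needs (trivial) checking is that every quantity appearing — the values of the central character $\o_{g,\g}$ on integral central elements, and the ordinary character values $\eta(x^{-1}gx)$ — lies in $\OC$, so that the reduction modulo $\pG$ can be performed summand by summand.
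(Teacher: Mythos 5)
Your proof is correct and is precisely the argument the paper intends: its one-line proof ("this follows from \ref{eq:fourier mieux} and Proposition~\ref{prop:blocks}") is exactly your expansion, with Proposition~\ref{prop:blocs-kg} supplying the termwise congruence of the central character values $\o_{g,\g}\bigl(\Sigh_g(xhx^{-1})\bigr)$ and $\o_{g,\g'}\bigl(\Sigh_g(xhx^{-1})\bigr)$, and integrality of $\eta(x^{-1}gx)$ letting you sum the congruences. No differences worth noting.
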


\begin{proof}
This follows from~\ref{eq:fourier mieux} and Proposition~\ref{prop:blocks}.
\end{proof}

\medskip

\bigskip

\subsection{$p'$-part}
Fix $(g,\g) \in \MC(G)$. Then it follows from~\ref{eq:fourier mieux} 
that, for all $\chi \in \Irr(KG)$, 
\equat\label{eq:e1}
\Psi_{g,\g}(V_{1,\chi}) = \chi(g).
\endequat

\bigskip

\begin{prop}\label{prop:p-partie}
Let $(g,\g)$ and $(h,\eta)$ be two elements in $\MC(G)$ which lie in the same $p$-block. Then 
$g_{p'}=h_{p'}$.
\end{prop}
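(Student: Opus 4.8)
The plan is to feed into Proposition~\ref{prop:blocks} only the test pairs of the form $(1,\chi)$ with $\chi \in \Irr(KG)$, and thereby reduce the statement to a classical fact about the characters of $G$ alone. First I would use that $(g,\g)$ and $(h,\eta)$ lie in the same $p$-block of $\MC(G)$ to deduce, from Proposition~\ref{prop:blocks}, that $\Psi_{g,\g}(V_{1,\chi}) \equiv \Psi_{h,\eta}(V_{1,\chi}) \mod \pG$ for every $\chi \in \Irr(KG)$; by formula~\ref{eq:e1} this reads $\chi(g) \equiv \chi(h) \mod \pG$ for all $\chi \in \Irr(KG)$. Thus everything reduces to the following assertion about $G$: if two elements of $G$ have congruent values modulo $\pG$ under all irreducible characters, then their $p'$-parts are conjugate in $G$ --- hence equal, since $[G_{p'}/\!\sim]$ was fixed as a set of representatives of conjugacy classes of $p'$-elements and both $g_{p'}$ and $h_{p'}$ lie in it.

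Next I would pass from $g$ and $h$ to $g_{p'}$ and $h_{p'}$ by means of the standard congruence $\chi(x) \equiv \chi(x_{p'}) \mod \pG$, valid for every $x \in G$ and every $\chi \in \Irr(KG)$: diagonalising $x$ on a module affording $\chi$ and writing each eigenvalue as the product of its $p'$-part (an eigenvalue of $x_{p'}$) by its $p$-part (a root of unity of $p$-power order, hence congruent to $1$ modulo $\pG$) gives it immediately. Applying this to $x=g$ and to $x=h$ and combining with the congruences already obtained yields $\chi(g_{p'}) \equiv \chi(h_{p'}) \mod \pG$ for all $\chi \in \Irr(KG)$. Finally I would invoke the classical fact that the $p'$-part of an element of $G$ is determined, up to conjugacy, by the reductions modulo $\pG$ of its values under all ordinary irreducible characters; equivalently, that the reduction modulo $\pG$ of the character table of $G$, restricted to the columns indexed by the $p$-regular classes, still has pairwise distinct columns. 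This gives $g_{p'}\sim h_{p'}$, and then $g_{p'}=h_{p'}$ by our normalisation of representatives.

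The step that requires the most care is this last one. Its point is that when $s$ is a $p'$-element, every eigenvalue entering a value $\chi(s)$ is a root of unity of order prime to $p$, so it already lies in $\OC$ and reduction modulo $\pG$ is injective on the group of such roots of unity; hence $\chi(s)$ taken modulo $\pG$ is exactly the value at $s$ of the Brauer character of the $kG$-module obtained by reduction, and the assertion becomes the statement that the irreducible Brauer characters of $G$ form a basis of the space of $k$-valued class functions on the set of $p$-regular classes of $G$. This is classical and can be quoted from the modular representation theory of finite groups (see \cite{isaacs}); all the other steps are routine once Proposition~\ref{prop:blocks} and~\ref{eq:e1} are in hand.
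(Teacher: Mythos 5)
Your main reduction is exactly the paper's: feed the pairs $(1,\chi)$ into Proposition~\ref{prop:blocks}, use~\ref{eq:e1} to get $\chi(g)\equiv\chi(h) \mod \pG$ for all $\chi\in\Irr(KG)$, invoke the classical fact that these congruences force $g_{p'}$ and $h_{p'}$ to be conjugate, and conclude equality from the chosen representatives. The paper simply cites \cite[Prop.~2.14]{bonnafe kg} for the classical fact; you attempt to prove it, and it is in that last step that your argument has a gap.

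After reducing (correctly) to $\chi(g_{p'})\equiv\chi(h_{p'}) \mod \pG$ for all $\chi$, you claim the conclusion follows because the irreducible Brauer characters form a $k$-basis of the class functions on $p$-regular classes. That statement is true but not sufficient: what you actually know is that the \emph{reductions of the ordinary characters} agree on $g_{p'}$ and $h_{p'}$, and these are the images of the irreducible Brauer characters under the decomposition matrix reduced mod $p$. To deduce that every class function on $p$-regular classes --- in particular the indicator of the class of $g_{p'}$ --- takes the same value at both elements, you need the reductions of the ordinary characters to \emph{span} that space, i.e.\ the surjectivity of the decomposition map from the Grothendieck group of $KG$-modules to that of $kG$-modules (equivalently, that the decomposition matrix has full column rank mod $p$). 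This is a genuine theorem of Brauer (see Serre's book), not a formal consequence of the Brauer characters being a basis; note that the Cartan determinant is a power of $p$, so the full-rank claim is not automatic. A cleaner route, and the one behind the paper's reference, is to use that the indicator function of the $p$-section of $s=g_{p'}$ is an $\OC$-linear combination of irreducible characters (this is the function $\eta_{s,h}$ with $h=1$ that the paper itself uses in Proposition~\ref{prop:p-partie-gamma}, via \cite[Prop.~2.20]{bonnafe kg}): evaluating the congruences on it gives $1\equiv\eta_{s,1}(h) \mod \pG$, hence $\eta_{s,1}(h)=1$, i.e.\ $h_{p'}$ is conjugate to $g_{p'}$. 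With either of these inputs supplied, your proof is complete and coincides with the paper's.
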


\begin{proof}
By Proposition~\ref{prop:blocks} and Equality~\ref{eq:e1}, it follows from the hypothesis 
that
$$\chi(g) \equiv \chi(h) \mod \pG$$
for all $\chi \in \Irr(KG)$. Hence $g_{p'}$ and $h_{p'}$ are conjugate 
in $G$ (see~\cite[Proposition~2.14]{bonnafe kg}), so they are equal according to our conventions explained 
in~\S\ref{sub:main}.
\end{proof}

\bigskip

\begin{prop}\label{prop:p-partie-gamma}
Let $s \in G_{p'}$ and let $\s$, $\s' \in \Irr(KC_G(s))$. Then $(s,\s)$ and $(s,\s')$ 
lie in the same $p$-block if and only if $\s$ and $\s'$ lie in the same $p$-block of 
$C_G(s)$.
\end{prop}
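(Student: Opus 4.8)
The implication ``$\Leftarrow$'' I would deduce immediately from Corollary~\ref{coro:block-kgg-inclus}. Since the Fourier coefficients, the characters $\Psi_{g,\g}$ and the $p$-block partition of $\MC(G)$ depend only on $G$-orbits, one may conjugate so as to assume $s \in [G_{p'}/\!\sim] \subseteq [G/\!\sim]$; Corollary~\ref{coro:block-kgg-inclus} applied to $g = s$ then says exactly that $\s$ and $\s'$ lying in the same $p$-block of $C_G(s)$ forces $(s,\s)$ and $(s,\s')$ to lie in the same $p$-block of $\MC(G)$.

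For ``$\Rightarrow$'' the plan is to encode Formula~\ref{eq:fourier mieux} into an algebra morphism. Still assuming $s \in [G_{p'}/\!\sim]$, for $h \in [G/\!\sim]$ let $\TC_h$ be the set of $C_G(s)$-conjugacy classes that are contained in $C_G(s)$ and consist of elements $G$-conjugate to $h$; for $\G \in \TC_h$ fix $x_\G \in G$ with $x_\G h x_\G^{-1} \in \G$ and set $a_\G = x_\G^{-1} s x_\G$. One checks that the double cosets indexing the sum in~\ref{eq:fourier mieux} (for the pair $(s,h)$) are exactly parametrised by $\TC_h$, and that the formula
$$\Phi_s(V_{h,\eta}) = \sum_{\G \in \TC_h} \eta(a_\G)\,\Sigh_s\bigl(x_\G h x_\G^{-1}\bigr)$$
defines an $\OC$-linear map $\Phi_s \colon \OC\Kb_G(G) \to \Zrm(\OC C_G(s))$ satisfying $\Psi_{s,\s} = \o_{s,\s} \circ \Phi_s$ for all $\s \in \Irr(KC_G(s))$; because $\Psi_{s,\s}$ and $\o_{s,\s}$ are algebra morphisms and $(\o_{s,\s})_\s$ is injective on $\Zrm(KC_G(s))$, $\Phi_s$ is then automatically multiplicative. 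Assuming $(s,\s)$ and $(s,\s')$ lie in the same $p$-block of $\MC(G)$, Proposition~\ref{prop:blocks} gives $\Psi_{s,\s} \equiv \Psi_{s,\s'} \bmod \pG$ on $\OC\Kb_G(G)$, hence $\o_{s,\s} \equiv \o_{s,\s'} \bmod \pG$ on the image of $\Phi_s$.

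It then suffices to prove that every block idempotent $e$ of $\OC C_G(s)$ lies in $\Phi_s(\OC\Kb_G(G)) + \pG\,\Zrm(\OC C_G(s))$. Granting this, write $e = \Phi_s(v) + z_0$ with $z_0 \in \pG\,\Zrm(\OC C_G(s))$; since $\o_{s,\s}$ sends $\pG\,\Zrm(\OC C_G(s))$ into $\pG$, one gets $\o_{s,\s}(e) \equiv \Psi_{s,\s}(v) \equiv \Psi_{s,\s'}(v) \equiv \o_{s,\s'}(e) \bmod \pG$, and as both values lie in $\{0,1\}$ they are equal; letting $e$ run over $\blocs(\OC C_G(s))$ then shows $\s$ and $\s'$ are in the same $p$-block of $C_G(s)$, as wanted. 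To prove the inclusion I would invoke Osima's theorem (\cite{isaacs}): $e$ is supported on $p$-regular elements, so is an $\OC$-combination of class sums $\Sigh_s(\d)$ with $\d$ $p$-regular. Now each $a_\G$ is $G$-conjugate to the $p'$-element $s$, hence $p$-regular, the $a_\G$ ($\G \in \TC_h$) are pairwise non-conjugate in $C_G(h)$, and when $h$ is $p$-regular so are the $x_\G h x_\G^{-1}$; so one is reduced to showing that, for each $p$-regular $h \in [G/\!\sim]$, the vectors $\bigl(\overline{\eta(a_\G)}\bigr)_{\G \in \TC_h}$ span $\prod_{\G \in \TC_h} k$ over $k$ as $\eta$ runs over $\Irr(KC_G(h))$. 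That is immediate from the standard fact that the reductions modulo $\pG$ of the ordinary irreducible characters of $C_G(h)$, restricted to the $p$-regular classes, span all $k$-valued class functions on the $p$-regular classes of $C_G(h)$ (surjectivity of the decomposition map), by projecting onto the coordinates indexed by the distinct $p$-regular classes $a_\G$.

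The main obstacle is exactly this detour through $p$-regular classes. A naive inversion of Formula~\ref{eq:fourier mieux} by column orthogonality in the centralisers $C_G(h)$ recovers $\o_{s,\s}\bigl(\Sigh_s(h)\bigr)$ only after division by $|C_{C_G(s)}(h)|$, which is typically divisible by $p$, so the individual central character values cannot be read off modulo $\pG$; one is forced instead to realise the block idempotents themselves and use that these are supported on $p$-regular elements, where the $p$-divisibility obstruction disappears. The remaining verifications — well-definedness and multiplicativity of $\Phi_s$, the identification of the double cosets with $\TC_h$, and the bookkeeping above — are routine.
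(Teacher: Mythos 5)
Your proof is correct, but the ``only if'' direction takes a genuinely different route from the paper's. The paper fixes an \emph{arbitrary} $h \in C_G(s)$ and evaluates the congruence of Proposition~\ref{prop:blocks} at the single virtual character $\eta_{s,h} \in \OC\Irr(KC_G(h))$ (the indicator of the $p'$-section of $s$ in $C_G(h)$, from~\cite[Prop.~2.20]{bonnafe kg}): since $\eta_{s,h}(x^{-1}sx)=1$ exactly when $x \in C_G(s)C_G(h)$, this kills every double coset except the trivial one in~\ref{eq:fourier mieux} and yields directly $\o_{s,\s}(\Sigh_s(h)) \equiv \o_{s,\s'}(\Sigh_s(h)) \bmod \pG$ for \emph{all} $h \in C_G(s)$, whence the conclusion by Proposition~\ref{prop:blocs-kg}. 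You instead package~\ref{eq:fourier mieux} as $\Psi_{s,\s}=\o_{s,\s}\circ\Phi_s$ and show that the image of $\Phi_s$ modulo $\pG$ contains the class sums of $p$-regular classes of $C_G(s)$ only, which forces the detour through Osima's theorem to reach the block idempotents; your spanning argument (the $a_\G$ being pairwise non-conjugate in $C_G(h)$, plus surjectivity of the decomposition map) is sound, and is in substance the same input as the paper's $\eta_{s,h}$ — the $p'$-section indicators being $\OC$-virtual characters is precisely what makes the reduced characters span the class functions on $p$-regular classes. Your ``main obstacle'' paragraph correctly diagnoses why naive inversion fails, but the paper shows the obstacle is avoidable without Osima: one well-chosen test character per $h$ recovers the central character on \emph{every} class sum, not just the $p$-regular ones. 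What your version buys is a cleaner conceptual statement (a central ``truncation'' map $\Phi_s$ whose image mod $\pG$ sees exactly the $p$-regular part of $\Zrm(\OC C_G(s))$, hence the idempotents); the multiplicativity of $\Phi_s$ that you note in passing is true but never needed.
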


\begin{proof}
The if part has been proved in Corollary~\ref{coro:block-kgg-inclus}. 
Conversely, assume that $(s,\s)$ and $(s,\s')$ lie in the same $p$-block. 
Fix $h \in C_G(s)$. Then $s \in C_G(h)$. Let $\eta_{s,h} : C_G(h) \to K$ 
be the class function on $C_G(h)$ defined by:
$$\eta_{s,h}(g)=
\begin{cases}
1 & \text{if $g_{p'}$ and $s$ are conjugate in $C_G(h)$,}\\
0 & \text{otherwise.}
\end{cases}$$
It follows from~\cite[Proposition~2.20]{bonnafe kg} that $\eta_{s,h} \in \OC \Irr(KC_G(h))$. Therefore, 
by~\ref{eq:fourier mieux} and Proposition~\ref{prop:blocks}, 
$$\sum_{\substack{x \in [C_G(s)\backslash G /C_G(h)] \\ xhx^{-1} \in C_G(s)}}
\eta_{s,h}(x^{-1} s x) \Bigl(\o_{s,\s}
\bigl(\Sigh_g(xhx^{-1}))-\o_{s,\s'}(\Sigh_g(xhx^{-1})\bigr)\Bigr) \equiv 0 \mod \pG.\leqno{(\#)}$$
Now, let $x \in G$ be such that $xhx^{-1} \in C_G(s)$. 
Since $x^{-1}sx$ is also a $p'$-element, $\eta_{s,h}(x^{-1}sx) = 1$ if and only if $s$ and $x^{-1}sx$ are 
conjugate in $C_G(h)$ that is, if and only if $x \in C_G(s)C_G(h)$. So it follows 
from $(\#)$ that
$$\o_{s,\s}(\Sigh_g(h)) \equiv \o_{s,\s'}(\Sigh_g(h))\mod \pG$$
for all $h \in C_G(s)$. This shows that $\s$ and $\s'$ lie in the same $p$-block of 
$C_G(s)$.
\end{proof}

\bigskip

\subsection{Last step} 
We shall prove here the last intermediate result:

\bigskip

\begin{prop}\label{prop:bse}
Let $(s,e) \in \MC^p(G)$ and let $(g,\g)$, $(g',\g') \in \BC_G(s,e)$. Then 
$(g,\g)$ and $(g',\g')$ are in the same $p$-block of $\MC(G)$.
\end{prop}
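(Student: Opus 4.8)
The plan is to reduce the statement to a comparison with one fixed pair lying ``over'' $(s,e)$, and then to exploit the Brauer--map congruence~\ref{eq:brauer-central} together with the block criterion of Proposition~\ref{prop:blocks}. Since $e$ is a block idempotent of $\OC C_G(s)$, the algebra $KC_G(s)e$ is nonzero, so we may fix some $\s \in \Irr(KC_G(s)e)$; the $G$-orbit of $(s,\s)$ contains a unique pair whose first coordinate lies in $[G/\!\sim]$ (namely $(s,\s)$ itself, since $s \in [G_{p'}/\!\sim]$), so $(s,\s) \in \MC(G)$. As ``lying in the same $p$-block of $\MC(G)$'' is an equivalence relation, it suffices to prove that \emph{every} $(g,\g) \in \BC_G(s,e)$ lies in the same $p$-block of $\MC(G)$ as $(s,\s)$; applying this to $(g,\g)$ and to $(g',\g')$ (both involving the same $e$) and using transitivity then yields the Proposition.

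So fix $(g,\g) \in \BC_G(s,e)$, write $s = g_{p'}$, and note $C_G(g) \subseteq C_G(s)$ (as $s$ is a power of $g$). By Proposition~\ref{prop:blocks} it is enough to show $\Psi_{g,\g}(V_{h,\eta}) \equiv \Psi_{s,\s}(V_{h,\eta}) \mod \pG$ for all $(h,\eta) \in \MC(G)$, and I would expand both sides using formula~\ref{eq:fourier mieux}. Two elementary congruences make the two expressions comparable. First, for any $a \in C_G(h)$ one has $\eta(a) \equiv \eta(a_{p'}) \mod \pG$: simultaneously diagonalise a representation afforded by $\eta$ at the commuting elements $a_{p'}$ and $a_p$, and note that the eigenvalues of $a_p$ are $p$-power roots of unity, hence $\equiv 1 \mod \pG$. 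Taking $a = x^{-1}gx$, so $a_{p'} = x^{-1}sx$, this lets us replace each factor $\eta(x^{-1}gx)$ occurring in~\ref{eq:fourier mieux} for $\Psi_{g,\g}(V_{h,\eta})$ by $\eta(x^{-1}sx)$ modulo $\pG$. Second, for $y \in C_G(s)$ the set $\Sig_s(y) \cap C_G(g)$ is a union of $C_G(g)$-conjugacy classes (again because $C_G(g) \subseteq C_G(s)$), say $\Sig_s(y) \cap C_G(g) = \bigsqcup_j \Sig_g(w_{y,j})$; then $\brauer_g^\OC\bigl(\Sigh_s(y)\bigr) = \sum_j \Sigh_g(w_{y,j})$, and~\ref{eq:brauer-central} gives
$$\o_{s,\s}\bigl(\Sigh_s(y)\bigr) \equiv \sum_j \o_{g,\g}\bigl(\Sigh_g(w_{y,j})\bigr) \mod \pG,$$
both sides being $\equiv 0$ when $\Sig_s(y) \cap C_G(g)$ is empty.

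Now I would match the two sums. In formula~\ref{eq:fourier mieux} for $\Psi_{g,\g}(V_{h,\eta})$, group the contributing $C_G(g)$-double cosets of $G$ according to the $C_G(s)$-double coset $\Omega = C_G(s)\,x_0\,C_G(h)$ containing each of them; the condition $xhx^{-1} \in C_G(g)$ forces $x_0hx_0^{-1} \in C_G(g) \subseteq C_G(s)$, so each such $\Omega$ is among those indexing~\ref{eq:fourier mieux} for $\Psi_{s,\s}(V_{h,\eta})$. Fixing such an $\Omega$ and putting $y = x_0hx_0^{-1} \in C_G(s)$, a direct check shows that $x \mapsto xhx^{-1}$ induces a bijection between the $C_G(g)$-double cosets inside $\Omega$ satisfying $xhx^{-1} \in C_G(g)$ and the $C_G(g)$-conjugacy classes contained in $\Sig_s(y) \cap C_G(g)$; moreover along any such $C_G(g)$-double coset the factor $\eta(x^{-1}sx)$ equals $\eta(x_0^{-1}sx_0)$ (left-multiplying $x_0$ by $C_G(s)$ does not affect $x_0^{-1}sx_0$, and $\eta$ is a class function on $C_G(h)$, so right-multiplication of $x_0$ by $C_G(h)$ does not affect its value). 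Combining this with the two congruences above, the total contribution of $\Omega$ to $\Psi_{g,\g}(V_{h,\eta})$ is congruent mod $\pG$ to $\eta(x_0^{-1}sx_0)\,\o_{s,\s}\bigl(\Sigh_s(y)\bigr)$, i.e. to the $\Omega$-term of $\Psi_{s,\s}(V_{h,\eta})$ in~\ref{eq:fourier mieux}. Summing over $\Omega$ --- the $\Omega$'s not hit by any $C_G(g)$-double coset are exactly those with $\Sig_s(y) \cap C_G(g)$ empty, and they contribute $\equiv 0$ on the $\Psi_{s,\s}$-side by the second congruence --- gives the required congruence, and hence the claim.

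The main obstacle is the bookkeeping in the last step: setting up cleanly the correspondence between $C_G(g)$-double cosets of $G$ and $C_G(g)$-conjugacy classes inside a single $C_G(s)$-conjugacy class $\Sig_s(y)$, and checking that the character factor $\eta(x^{-1}sx)$ is constant along a $C_G(s)$-double coset so that it can be pulled out \emph{before} applying~\ref{eq:brauer-central}. Everything else is a formal consequence of~\ref{eq:brauer-central}, Proposition~\ref{prop:blocks}, and the congruence $\eta(a) \equiv \eta(a_{p'}) \mod \pG$.
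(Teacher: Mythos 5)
Your proof is correct and takes essentially the same route as the paper's: reduce to comparing each $(g,\g)$ with a fixed $(s,\s)$, $\s \in \Irr(KC_G(s)e)$; expand $\Psi_{g,\g}(V_{h,\eta})$ via~\ref{eq:fourier mieux}; group the contributing $C_G(g)$-double cosets into $C_G(s)$-double cosets; replace $\eta(x^{-1}gx)$ by $\eta(x^{-1}sx)$ using $\eta(a)\equiv\eta(a_{p'}) \mod \pG$; and recognise the resulting inner sum of class sums as $\brauer_g^\OC\bigl(\Sigh_s(xhx^{-1})\bigr)$ so that~\ref{eq:brauer-central} finishes the argument. The only difference is cosmetic: you describe the Brauer-map identity as the decomposition of $\Sig_s(y)\cap C_G(g)$ into $C_G(g)$-classes, whereas the paper indexes the same decomposition by double cosets $[C_G(g)\backslash C_G(s)/(C_G(s)\cap C_G(xhx^{-1}))]$.
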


\medskip

\begin{proof}
We fix $\s \in \Irr(KC_G(s)e)$. It is sufficient to show that $(g,\g)$ and $(s,\s)$ 
are in the same $p$-block of $\MC(G)$. For this, let $(h,\eta) \in \MC(G)$. By 
Proposition~\ref{prop:p-partie}, we have $g_{p'}=s$, so $C_G(g) \subset C_G(s)$. 
So~\ref{eq:fourier mieux} can be rewritten:
$$\Psi_{g,\g}(V_{h,\eta})=
\sum_{x \in [C_G(s)\backslash G/C_G(h)]} 
\sum_{\substack{y \in [C_G(g)\backslash C_G(s) x C_G(h) /C_G(h)] \\ yhy^{-1} \in C_G(g)}} 
\eta(y^{-1}gy) \o_{g,\g}(\Sigh_g(yhy^{-1})).$$
Now, let $x \in [C_G(s)\backslash G/C_G(h)]$ and $y \in [C_G(g)\backslash C_G(s) x C_G(h) /C_G(h)]$ 
be such that $yhy^{-1} \in C_G(g)$. Then $yhy^{-1} \in C_G(s)$ and so $xhx^{-1} \in C_G(s)$. 
Moreover $y^{-1}sy$ is conjugate to $x^{-1}sx$ in $C_G(h)$. Finally, 
it is well-known (nd easy to prove) that $\eta(y^{-1}hy) \equiv \eta(y^{-1}sy) \mod \pG$ 
(see for instance~\cite[Proposition~2.14]{bonnafe kg}). Therefore:
$$\Psi_{g,\g}(V_{h,\eta}) \equiv 
\sum_{\substack{x \in [C_G(s)\backslash G/C_G(h)] \\ xhx^{-1} \in C_G(s)}} 
\eta(x^{-1} s x) ~\o_{g,\g}\Bigl(
\sum_{\substack{y \in [C_G(g)\backslash C_G(s) x C_G(h) /C_G(h)] \\ yhy^{-1} \in C_G(g)}} 
\Sigh_g(yhy^{-1}) \Bigr) \mod \pG.\leqno{(\diamondsuit)}$$
Now, let $x \in [C_G(s)\backslash G/C_G(h)]$ be such that $xhx^{-1} \in C_G(s)$. 
Then, by definition of the Brauer map, 
$$\brauer_g^\OC(\Sigh_s(xhx^{-1})) 
= \sum_{\substack{z \in [C_G(g)\backslash C_G(s)/(C_G(s) \cap C_G(xhx^{-1}))] \\
z(xhx^{-1})z^{-1} \in C_G(g)}} \Sigh_g((zx)h(zx)^{-1}).\leqno{(\heartsuit)}$$
But $(zx)_{z \in [C_G(g)\backslash C_G(s)/(C_G(s) \cap C_G(xhx^{-1}))]}$ is a set 
of representatives of double classes in $C_G(g)\backslash C_G(s) x C_G(h) / C_G(h)$. 
So it follows from $(\diamondsuit)$ and $(\heartsuit)$ that 
$$\Psi_{g,h} (V_{h,\eta}) \equiv 
\sum_{\substack{x \in [C_G(s)\backslash G/C_G(h)] \\ xhx^{-1} \in C_G(s)}} 
\eta(x^{-1} s x) ~\o_{g,\g}\bigl(\brauer_g^\OC(\Sigh_s(xhx^{-1}))\bigr).$$
Using now~\ref{eq:brauer-central} and~\ref{eq:fourier mieux}, we obtain that 
$$\Psi_{g,h} (V_{h,\eta}) \equiv \Psi_{s,\s}(V_{h,\eta}) \mod \pG,$$
as desired.
\end{proof}

\bigskip

\begin{proof}[Proof of Theorem~\ref{theo:main}]
Let $(s,e)$ and $(s',e')$ be two elements of $\MC^p(G)$ such that $\BC_G(s,e)$ and $\BC_G(s',e')$ 
are contained in the same $p$-block of $\MC(G)$ (see Proposition~\ref{prop:bse}). 
Let $\s \in \Irr\bigl(K C_G(s)e\bigr)$ and $\s' \in \Irr\bigl(K C_G(s')e'\bigr)$. 

Then $(s,\s)$ and $(s',\s')$ are in the same $p$-block, so it follows from 
Proposition~\ref{prop:p-partie} that $s=s'$ and it follows from Proposition~\ref{prop:p-partie-gamma} 
that $\g$ and $\g'$ are in the same $p$-block of $C_G(s)$, that is $e=e'$. This completes 
the proof of Theorem~\ref{theo:main}.
\end{proof}

\bigskip

\end{document}